\newcommand{\e}{\epsilon}
\newcommand{\x}{\mathbf{x}}
\newcommand{\g}{\frak{g}}
\newcommand{\Hom}{\mathrm{Hom}}
\newcommand{\Coder}{\mathrm{Coder}}
\newcommand{\Der}{\mathrm{Der}}
\newcommand{\p}{\prime}
\renewcommand{\c}{\circ}
\newcommand{\ot}{\otimes}
\newcommand{\pa}{\partial}
\newtheorem{definition}{Definition}[section]
\newtheorem{claim}[definition]{Claim}
\newtheorem{lemma}[definition]{Lemma}
\newtheorem{proposition}[definition]{Proposition}
\newtheorem{theorem}[definition]{Theorem}
\newtheorem{corollary}[definition]{Corollary}
\newtheorem{remark}[definition]{Remark}
\newtheorem{example}[definition]{Example}
\date{}
\begin{document}

\title{Derived brackets and sh Leibniz algebras}

\author{K. UCHINO}
\maketitle
\abstract
{
We develop a general framework for the construction
of various derived brackets.
We show that suitably deforming the differential
of a graded Leibniz algebra extends the derived
bracket construction and leads to the notion
of strong homotopy (sh) Leibniz algebra.
We discuss the connections among
homotopy algebra theory, deformation theory
and derived brackets.
We prove that the derived bracket construction
induces a map from
suitably defined deformation theory equivalence
classes to the isomorphism classes of sh Leibniz algebras.
}
\footnote[0]{Mathematics Subject Classifications (2000):
17A32, 53D17}
\footnote[0]{Keywords: strong homotopy Leibniz algebras,
derived brackets, deformation theory}

\section{Introduction.}

Let $(V,d,\{,\})$ be a chain complex
equipped with a binary bilinear $V$-valued operation $\{,\}$.
The triple $(V,d,\{,\})$ is called a dg Leibniz algebra
or a dg Loday algebra by some authors,
if the differential is a derivation with respect to
the bracket and the bracket satisfies
the (graded) Leibniz identity.
When the bracket is anti-commutative,
the Leibniz identity is equivalent to the Jacobi identity.
In this sense, (dg) Leibniz algebras are
noncommutative analogues of classical
(dg) Lie algebras.\\
\indent
Let $(V,d,\{,\})$ be a dg Leibniz algebra.
We define a modified bracket:
$$
\{x,y\}_{d}:=(-1)^{x}\{dx,y\},
$$
which is called a {\em derived bracket}.
In Kosmann-Schwarzbach \cite{K}, it was shown that
the derived bracket satisfies the Leibniz identity.
The original idea of the derived bracket
goes back at least to Koszul  (unpublished).
The derived brackets play important roles
in modern analytical mechanics (cf. \cite{K2}).
For instance, a Poisson bracket on a smooth manifold
is given as a derived bracket,
$\{f,g\}:=[df,g]_{SN}$, where $f,g$ are smooth functions
on the manifold,
$[,]_{SN}$ is a Schouten-Nijenhuis bracket and $d$ is
a coboundary operator of Poisson cohomology.
It is known that the Schouten-Nijenhuis bracket is also
a derived bracket of a certain graded Poisson bracket.
\medskip\\
\indent
We consider $n$-fold derived brackets:
$$
(\pm)[[...[\delta x_{1},x_{2}]...],x_{n}],
$$
where $[\cdot,\cdot]$ is a Lie bracket,
$\pm$  an appropriate sign,
and $\delta$ 
a certain derivation, not necessarily of square zero.
The $n$-ary (higher) derived brackets in the category
of {\em Lie} algebras
were studied by several authors in various contexts:
in an article on Poisson geometry by Roytenberg (2002) \cite{R1},
in a paper on homotopy algebra theory by Voronov (2005)
\cite{Vo}, in early work of Vallejo (2001) \cite{Va}
who gave a necessary and sufficient condition
for the $n$-ary derived brackets become Nambu-Lie brackets.\\
\indent
The purpose of this note is to complete
the theory of higher derived bracket construction
in the category of 
{\em Leibniz\/}
algebras.
To study the higher derived bracket
composed of {\em pure} Leibniz brackets,
we apply the theory of 
{\em sh Leibniz algebras}
(also called Leibniz ${\infty}$-algebras,
sh Loday algebras or Loday ${\infty}$-algebras).
Sh Leibniz algebras are Leibniz algebras
{\em up to homotopy} as well as noncommutative analogues
of sh Lie algebras.
We refer the reader to Ammar and Poncin \cite{AP}
for the study of sh Leibniz algebras.
We give a short survey of
sh Leibniz algebras in Section 4.1 below.
The main result of this note is
Theorem \ref{them}:
Let $(V,\delta_{0},\{,\})$ be a dg Leibniz algebra.
We consider a deformation of $\delta_{0}$,
$$
\delta_{t}=\delta_{0}+t\delta_{1}+t^{2}\delta_{2}
+\cdot\cdot\cdot,
$$
where $t$ is a formal parameter
and $\delta_{t}$ a differential on $V[[t]]$.
We define an $i$-ary derived bracket as
$$
l_{i}(x_{1},...,x_{i})
:=(\pm)\{\{...\{\delta_{i-1}x_{1},x_{2}\},...\},x_{i}\},
$$
where $\pm$ is an appropriate sign.
We prove that the collection of the higher
derived brackets, $\{l_{1},l_{2},...\}$,
yields an sh Leibniz algebra structure.
The theorem follows from a universal formula,
satisfied by Leibniz brackets,
which we establish in Lemma \ref{key1}.\\
\indent
The higher derived bracket construction proposed in this paper
is useful to study a relation between
homotopy algebra theory and deformation theory.
In Proposition \ref{lastprop}, we will show that
if two deformations of $\delta_{0}$ are gauge equivalent,
then the induced sh Leibniz algebras are equivalent;
in other words, the higher derived bracket construction
is invariant under gauge transformations.
\medskip\\
\noindent
Acknowledgement.
The author would like to thank very
much Professor Jean-Louis Loday
and referees for kind advice and useful comments,
and also thank Professors Johannes Huebschmann
and Akira Yoshioka
for kind advice.
\section{Preliminaries}
\subsection{Notation and Assumptions}
The base field is a field $\mathbb{K}$
of characteristic zero. The unadorned tensor product
denotes the tensor product  $\ot:=\ot_{\mathbb{K}}$
over the field $\mathbb{K}$.
We follow the standard Koszul sign convention, for
instance, a linear map $f\ot g:V\ot V\to V\ot V$
satisfies
$$
(f\ot g)(x\ot y)=(-1)^{|g||x|}f(x)\ot g(y),
$$
where $x,y\in V$ and
where $|g|$,$|x|$ are the degrees of $g$,$x$.
We will denote by $s$ the operator 
that raises degree by $1$ and, likewise,
by $s^{-1}$ the operator that lowers degree by $1$.
The Koszul sign convention for shifting operators is,
for instance,
$$
s\ot s=(s\ot 1)(1\ot s)
=-(1\ot s)(s\ot 1).
$$
We call a derivation of degree $1$ a {\em differential},
if it is of square zero.
Given a homogeneous member $x$ of a graded
vector space, we denote the sign $(-1)^{|x|}$ simply by $(-1)^{x}$.

\subsection{Leibniz algebras and derived brackets}
Let $(V,d,\{,\})$ be a 
chain complex
equipped with a binary bracket.
We assume that the degree of the differential is $+1$ (or odd)
and the degree of the bracket is $0$ (or even).
The triple is called a dg (left) Leibniz algebra,
or a dg (left) Loday algebra by some authors,
if $d$ is a derivation with respect to the bracket
and the bracket satisfies a Leibniz identity, i.e.,
\begin{eqnarray*}
d\{x,y\}&=&\{dx,y\}+(-1)^{|x|}\{x,dy\},\\
\{x,\{y,z\}\}&=&\{\{x,y\},z\}+(-1)^{|x||y|}\{y,\{x,z\}\},
\end{eqnarray*}
where $x,y,z\in V$.
A dg Lie algebra can be seen as a special Leibniz algebra
of which the bracket is anti-commutative.
In this sense, (dg) Leibniz algebras are noncommutative
analogues of (dg) Lie algebras.\\
\indent
We recall the classical derived bracket construction in \cite{K,K2}.
Define a new bracket on the shifted space $sV$ by
\begin{equation}\label{lasdder}
\{sx,sy\}_{d}:=(-1)^{x}s\{dx,y\}.
\end{equation}
This bracket is called a (binary) derived bracket on $sV$.
Eq. (\ref{lasdder}) is equal to
the following tensor identity,
$$
\{\cdot,\cdot\}_{d}(sx\ot sy)=
s\{\cdot,\cdot\}(s^{-1}\ot s^{-1})(sds^{-1}\ot 1)(sx\ot sy).
$$
We recall two basic propositions.
\begin{itemize}
\item
The derived bracket also satisfies
the graded Leibniz identity, i.e.,
$$
\{sx,\{sy,sz\}_{d}\}_{d}
=\{\{sx,sy\}_{d},sz\}_{d}+(-1)^{(x+1)(y+1)}\{sy,\{sx,sz\}_{d}\}_{d}.
$$
\end{itemize}
We consider the cases of dg Lie algebras.
\begin{itemize}
\item
Let $(V,d,[,])$ be a dg Lie algebra and 
let $\g(\subset V)$ a trivial subalgebra of the Lie algebra.
If $s\g$ is closed under the derived bracket,
then $s\g$ is a Lie algebra, that is,
the derived bracket is anti-commutative on $s\g$.

\end{itemize}
\section{Main results}
Let $V$ be a graded vector space
and let $l_{i}:V^{\ot i}\to V$ be
an $i$-ary multilinear map with the degree $2-i$,
for each $i\ge 1$.
\begin{definition}\label{shdef}
(\cite{AP})
The space $(V,l_{1},l_{2},...)$
with the multilinear maps
is called a strong homotopy (sh)
Leibniz algebra,
if the collection $\{l_{i}\}_{i\ge 1}$
satisfies (\ref{homleib}) below.
\begin{multline}\label{homleib}
\sum_{i+j=Const}
\sum^{i+j-1}_{k=j}\sum_{\sigma}
\chi(\sigma)(-1)^{(k+1-j)(j-1)}(-1)^
{j(x_{\sigma(1)}+\cdot\cdot\cdot+x_{\sigma(k-j)})}\\
l_{i}(x_{\sigma(1)},...,x_{\sigma(k-j)},
l_{j}(x_{\sigma(k+1-j)},...,x_{\sigma(k-1)},x_{k}),
x_{k+1},...,x_{i+j-1})=0,
\end{multline}
where $x_{\cdot}\in V$,
$\sigma\in S_{k-1}$ is a $(k-j,j-1)$-unshuffle (\cite{M1}),
i.e.,
$$
\sigma(1)<\cdot\cdot\cdot<\sigma(k-j), \ \
\sigma(k+1-j)<\cdot\cdot\cdot<\sigma(k-1),
$$
and $\chi(\sigma)$ is an anti-Koszul sign,
$\chi(\sigma):=sgn(\sigma)\e(\sigma)$.
\end{definition}
An sh Lie algebra can be seen as a
special sh Leibniz algebra whose
structures $l_{i\ge 2}$ are skewsymmetric.
\medskip\\
\indent
Let $(V,\{,\})$ be a Leibniz algebra.
We define an $i$-ary bracket associated with the Leibniz
bracket as
$$
N_{i}(x_{1},...,x_{i}):=\{...\{\{x_{1},x_{2}\},x_{3}\},...,x_{i}\}.
$$
It is well-known that $N_{i}$ satisfies
an $i$-ary Leibniz identity, so-called
Nambu-Leibniz identity (cf. \cite{CLP}).
Hence we denote the higher bracket by $N_{\cdot}$.
Let $\Der(V)$ be the space of derivations on the Leibniz algebra.
For any $D\in\Der(V)$, we define a multilinear map as
$$
N_{i}D:=N_{i}(D\ot\overbrace{1\ot\cdot\cdot\cdot\ot 1}^{i-1}),
$$
or equivalently,
$N_{i}D(x_{1},...,x_{i})=
\{...\{\{D(x_{1}),x_{2}\},x_{3}\},...,x_{i}\}$,
in particular, $N_{1}D:=D$.\\
\indent
Let $\delta_{0}\in\Der(V)$ be a differential on the Leibniz algebra.
We consider a formal deformation of $\delta_{0}$,
$$
\delta_{t}:=\delta_{0}+t\delta_{1}+t^{2}\delta_{2}
+\cdot\cdot\cdot.
$$
The deformation $\delta_{t}$ is
a differential on $V[[t]]$, which
is a Leibniz algebra of formal series
with coefficients in $V$.
The differential condition $\delta^{2}_{t}=0$ is equivalent to
the following condition,
\begin{equation}\label{keycond}
\sum_{i+j=Const}\delta_{i}\delta_{j}=0.
\end{equation}
\begin{definition}\label{defin0}
We define an $i$-ary derived bracket on $sV$ as
\begin{equation*}
l_{i}:=(-1)^{\frac{(i-1)(i-2)}{2}}
s\c{N_{i}}\c\mathbf{s}^{-1}(i)\c
(s\delta_{i-1}s^{-1}\ot\mathbf{1}),
\end{equation*}
where
$\textbf{s}^{-1}(i)=\overbrace{s^{-1}\ot\cdot\cdot\cdot\ot s^{-1}}^{i}$,
$\mathbf{1}=\overbrace{1\ot\cdot\cdot\cdot\ot 1}^{i-1}$.
\end{definition}
It is obvious that the degree of the $i$-ary derived bracket
is $2-i$ for each $i\ge 1$.
We see an explicit expression of the higher derived bracket.
\begin{proposition}
For each $i\ge 1$,
the higher derived bracket has the following form on $V$,
\begin{equation*}
(\pm)\{...\{\{\delta_{i-1}x_{1},x_{2}\},x_{3}\},...,x_{i}\}=
s^{-1}l_{i}(sx_{1},...,sx_{i}),
\end{equation*}
where
\begin{eqnarray*}
\pm=\left\{
\begin{array}{ll}
(-1)^{x_{1}+x_{3}+\cdot\cdot\cdot+x_{2n+1}+\cdot\cdot\cdot} & i=\text{even},\\
(-1)^{x_{2}+x_{4}+\cdot\cdot\cdot+x_{2n}+\cdot\cdot\cdot} & i=\text{odd}.
\end{array}
\right.
\end{eqnarray*}
\end{proposition}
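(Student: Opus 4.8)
The plan is to unwind definition \eqref{ddb} by feeding the decomposable tensor $sx_{1}\ot\cdots\ot sx_{i}$ through the composite of maps from right to left, recording the Koszul sign generated at each stage, and then comparing the accumulated sign with the combinatorial prefactor $(-1)^{(i-1)(i-2)/2}$. Throughout I use that $|\delta_{i-1}|=+1$ (so that $|s\delta_{i-1}x_{1}|\equiv|x_{1}|\pmod 2$), which is forced by $|d|=+1$ together with the formal parameter $t$ having degree $0$.

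First I would apply $(s\delta_{i-1}s^{-1}\ot\mathbf{1})$. Since $s\delta_{i-1}s^{-1}$ acts on the first slot while all other slots carry the degree-$0$ identity, the Koszul rule contributes no sign and the tensor becomes $s\delta_{i-1}x_{1}\ot sx_{2}\ot\cdots\ot sx_{i}$. Next I would apply $\mathbf{s}^{-1}(i)=(s^{-1})^{\ot i}$; this is the only step producing a nontrivial sign. Writing $v_{1}=s\delta_{i-1}x_{1}$ and $v_{k}=sx_{k}$ for $k\ge2$, and using $|s^{-1}|=-1\equiv1$, the convention $(f\ot g)(x\ot y)=(-1)^{|g||x|}f(x)\ot g(y)$ iterates to the sign $(-1)^{\sum_{1\le k<l\le i}|v_{k}|}=(-1)^{\sum_{k=1}^{i-1}(i-k)|v_{k}|}$. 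Substituting $|v_{1}|\equiv|x_{1}|$ and $|v_{k}|\equiv|x_{k}|+1$, the exponent reduces modulo $2$ to a degree part $\sum_{k=1}^{i-1}(i-k)|x_{k}|$ plus a constant part $\sum_{k=2}^{i-1}(i-k)=(i-1)(i-2)/2$. After this the tensor is $\delta_{i-1}x_{1}\ot x_{2}\ot\cdots\ot x_{i}$; applying $N_{i}$ then the outer $s$, and finally cancelling that $s$ against the $s^{-1}$ on the right-hand side of \eqref{classicalone}, produces $[\delta_{i-1}x_{1},\ldots,x_{i}]$ carrying the total sign $(-1)^{(i-1)(i-2)/2}$ times the Koszul sign above.

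The only delicate point, and hence the main obstacle, is the parity bookkeeping. The constant part of the Koszul sign is exactly $(i-1)(i-2)/2$, so it cancels the prefactor, since $(i-1)(i-2)$ is always even; what survives is precisely the degree part $(-1)^{\sum_{k=1}^{i-1}(i-k)|x_{k}|}$. I would finish by reading off this exponent by parity: the coefficient $i-k$ is odd exactly when $i$ and $k$ have opposite parity, so for $i$ even the surviving indices are the odd ones $1,3,\ldots,i-1$, giving $(-1)^{x_{1}+x_{3}+\cdots}$, while for $i$ odd they are the even ones $2,4,\ldots,i-1$, giving $(-1)^{x_{2}+x_{4}+\cdots}$. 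This matches the two cases of $\pm$ in the statement, hence $(\pm)[\delta_{i-1}x_{1},\ldots,x_{i}]=s^{-1}[sx_{1},\ldots,sx_{i}]_{d}$. As a safeguard against sign errors I would verify the computation explicitly for $i=1,2,3,4$, where the alternating pattern of odd/even indices is already transparent.
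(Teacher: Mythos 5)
Your proposal is correct and takes essentially the same route as the paper: both arguments unwind the definition of the $i$-ary derived bracket on a decomposable tensor and track Koszul signs, the only cosmetic difference being that the paper pre-composes with $\mathbf{s}(i)$ to extract the sign $(\pm)$ at the very first step, whereas you generate it when applying $\mathbf{s}^{-1}(i)$. Your bookkeeping (the constant part $(i-1)(i-2)/2$ cancelling the prefactor, the degree part $\sum_{k=1}^{i-1}(i-k)|x_k|$ reducing to the stated odd/even index pattern) is exactly the content of the paper's chain of equalities.
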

\begin{proof}
$l_{i}(sx_{1},...,sx_{i})=
(-1)^{\frac{(i-1)(i-2)}{2}}
s\c{N_{i}}\c\textbf{s}^{-1}(i)\c(s\delta_{i-1}s^{-1}\ot\textbf{1})
(sx_{1}\ot\cdot\cdot\cdot\ot sx_{i})=$
\begin{eqnarray*}
&=&
(\pm)(-1)^{\frac{(i-1)(i-2)}{2}}s\c{N_{i}}\c
\textbf{s}^{-1}(i)\c(s\delta_{i-1}s^{-1}\ot\textbf{1})\c
\textbf{s}(i)
(x_{1}\ot\cdot\cdot\cdot\ot x_{i})\\
&=&
(\pm)(-1)^{\frac{(i-1)(i-2)}{2}}s\c{N_{i}}\c
\textbf{s}^{-1}(i)\c(s\delta_{i-1}\ot\textbf{s}(i-1))
(x_{1}\ot\cdot\cdot\cdot\ot x_{i})\\
&=&
(\pm)(-1)^{\frac{(i-1)(i-2)}{2}}(-1)^{(i-1)}s\c{N_{i}}\c
\textbf{s}^{-1}(i)\c\textbf{s}(i)
(\delta_{i-1}x_{1}\ot\cdot\cdot\cdot\ot x_{i})\\
&=&
(\pm)(-1)^{\frac{(i-1)(i-2)}{2}}(-1)^{(i-1)}
(-1)^{\frac{i(i-1)}{2}}s\c{N_{i}}
(\delta x_{1}\ot\cdot\cdot\cdot\ot x_{i})\\
&=&(\pm)s\{...\{\{\delta_{i-1}x_{1},x_{2}\},x_{3}\},...,x_{i}\},
\end{eqnarray*}
where $\textbf{s}(i):=s\ot\cdots\ot s$ ($i$-times).
\end{proof}
The main result of this note is as follows.
\begin{theorem}\label{them}
The system $(sV,l_{1},l_{2},l_{3},...)$
associated with the higher derived brackets
defined in Definition \ref{defin0}
forms an sh Leibniz algebra.
\end{theorem}
We will give a proof of the theorem in the next section.
We consider the cases of dg Lie algebras.
\begin{corollary}\label{refintro}
Assume that in Theorem \ref{them} $V$ is a Lie algebra.
Let $\g$ be an abelian subalgebra of the Lie algebra.
If $s\g$ is a subalgebra of the induced sh Leibniz algebra,
then $s\g$ becomes an sh Lie algebra.
\end{corollary}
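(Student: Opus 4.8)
The plan is to take the hypothesis that $s\g$ is a sh Leibniz subalgebra of the structure produced by Theorem \ref{them} and upgrade it to a sh Lie algebra by proving that every bracket $l_{i}=[\cdot,\ldots,\cdot]_{d}$ with $i\ge 2$ is graded skewsymmetric on $s\g$ with the anti-Koszul sign; by the definition recalled after the sh Leibniz definition, skewsymmetry of all the higher brackets is exactly the extra condition separating sh Lie from sh Leibniz. By (\ref{classicalone}), on $V$ the $i$-ary bracket equals, up to the position sign $(\pm)$ recorded there, the nested bracket $N_{i}(\delta_{i-1}x_{1},x_{2},\ldots,x_{i})=[\delta_{i-1}x_{1},x_{2},\ldots,x_{i}]$, so it suffices to analyze how this expression transforms under adjacent transpositions of its arguments, for $x_{1},\ldots,x_{i}\in\g$. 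Since adjacent transpositions generate $S_{i}$, controlling each of them yields full skewsymmetry.

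First I would record that each $\delta_{m}$ is a graded derivation of the Lie bracket: expanding the derivation identity for the differential $d$ on $V[[t]]$ in powers of $t$ gives $\delta_{m}[x,y]=[\delta_{m}x,y]+(-1)^{x}[x,\delta_{m}y]$ for every $m$. Because $N_{i}$ is an iterated Lie bracket, $\delta_{i-1}$ acts on it as a multiderivation, $\delta_{i-1}N_{i}(x_{1},\ldots,x_{i})=\sum_{k=1}^{i}(-1)^{x_{1}+\cdots+x_{k-1}}N_{i}(x_{1},\ldots,\delta_{i-1}x_{k},\ldots,x_{i})$. As $\g$ is abelian, $N_{i}(x_{1},\ldots,x_{i})=0$ for $x_{j}\in\g$, so the left-hand side vanishes and this becomes a linear relation among the $i$ brackets that carry $\delta_{i-1}$ in a single slot.

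For the transposition of the first two arguments I would use this relation directly: every term with $\delta_{i-1}$ in a slot $k\ge 3$ begins with the innermost bracket $[x_{1},x_{2}]=0$ and hence drops out, leaving $[\delta_{i-1}x_{1},x_{2},\ldots]+(-1)^{x_{1}}[x_{1},\delta_{i-1}x_{2},x_{3},\ldots]=0$; rewriting the second term through $[x_{1},\delta_{i-1}x_{2}]=-(-1)^{x_{1}(x_{2}+1)}[\delta_{i-1}x_{2},x_{1}]$ produces $[\delta_{i-1}x_{1},x_{2},\ldots]=(-1)^{x_{1}x_{2}}[\delta_{i-1}x_{2},x_{1},x_{3},\ldots]$. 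For a transposition of two tail arguments $x_{k},x_{k+1}$ with $k\ge 2$, neither slot carries $\delta_{i-1}$, so I would instead apply the graded Leibniz identity to the local factor $[[A,x_{k}],x_{k+1}]$ (with $A$ the accumulated head bracket): the term $[A,[x_{k},x_{k+1}]]$ vanishes since $[x_{k},x_{k+1}]=0$, and skewsymmetry of the Lie bracket then gives $[\ldots,x_{k},x_{k+1},\ldots]=(-1)^{x_{k}x_{k+1}}[\ldots,x_{k+1},x_{k},\ldots]$.

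Finally I would combine these interior signs with the position sign $(\pm)$ of (\ref{classicalone}), which under any adjacent transposition changes by exactly $(-1)^{x_{k}+x_{k+1}}$, because one of the two swapped slots is of the parity that $(\pm)$ counts. In both cases the total sign collapses to $(-1)^{x_{k}x_{k+1}+x_{k}+x_{k+1}}=-(-1)^{(x_{k}+1)(x_{k+1}+1)}$, which is precisely the anti-Koszul skewsymmetry demanded of an sh Lie bracket on the shifted space $s\g$. I expect the main obstacle to be not conceptual but the disciplined bookkeeping of the Koszul and shift signs, i.e.\ verifying that they really assemble into $-(-1)^{(x_{k}+1)(x_{k+1}+1)}$; the two collapse mechanisms above (an interior $[x_{1},x_{2}]$ or $[x_{k},x_{k+1}]$ dying by abelianness) are exactly what kill every unwanted term, leaving only the clean transposition sign.
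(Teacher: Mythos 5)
Your argument is correct and follows exactly the route the paper intends: the corollary is stated without a written proof, but the binary computation in Section 2.3 (using $d[x,y]=0$ on the abelian subalgebra plus skewsymmetry of the Lie bracket) is precisely the $i=2$ instance of your adjacent-transposition argument, and the paper's remark that sh Lie algebras are sh Leibniz algebras with all $l_{i}$ ($i\ge 2$) skewsymmetric is the reduction you invoke. Your sign bookkeeping (the interior sign $(-1)^{x_{k}x_{k+1}}$ from abelianness and the Leibniz/derivation identities, times the $(-1)^{x_{k}+x_{k+1}}$ shift of the position sign in (\ref{classicalone}), giving $-(-1)^{(x_{k}+1)(x_{k+1}+1)}$) checks out.
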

\begin{example}
(Deformation theory, cf. \cite{M4})
Let $(V,\delta_{0},[,])$ be a dg Lie algebra with
a Maurer-Cartan (MC) element
$\theta_{t}:=t\theta_{1}+t^{2}\theta_{2}+\cdot\cdot\cdot$,
which is a solution of the MC-equation:
$$
\delta_{0}\theta_{t}+\frac{1}{2}[\theta_{t},\theta_{t}]=0.
$$
We put $\delta_{i}(-):=[\theta_{i},-]$ for each $i\ge 1$.
Then the collection $\{\delta_{i}\}$ satisfies eq.
(\ref{keycond}) because $\theta_{t}$ is a solution of
the MC-equation.
Therefore an algebraic deformation theory admits
an sh Leibniz algebra structure,
via the higher derived bracket construction.
\end{example}
\section{Proof of Theorem \ref{them}}
The theorem is given as a corollary of the key lemma
(Lemma \ref{key1} below). To state the lemma,
we recall an alternative definition
of sh Leibniz algebra.
\subsection{Sh Leibniz algebras (cf. \cite{AP})}
We recall the notion of dual-Leibniz coalgebra (\cite{L2,L3}).
A dual-Leibniz coalgebra is, by definition,
a (graded) vector space equipped with a comultiplication,
$\Delta$, satisfying the identity below.
$$
(1\ot \Delta)\Delta=(\Delta\ot 1)\Delta+
((12)\ot 1)(\Delta\ot 1)\Delta,
$$
where $(12)\in S_{2}$.
We consider the tensor space over a graded vector space:
$$
\bar{T}V:=V\oplus V^{\ot 2}\oplus V^{\ot 3}\oplus\cdot\cdot\cdot.
$$
Define a comultiplication,
$\Delta:\bar{T}V\to\bar{T}V\ot\bar{T}V$,
by $\Delta(V):=0$ and
\begin{equation*}
\Delta(x_{1},...,x_{n+1}):=
\sum^{n}_{i=1}\sum_{\sigma}\e(\sigma)
(x_{\sigma(1)},x_{\sigma(2)},...,x_{\sigma(i)})\ot
(x_{\sigma(i+1)},...,x_{\sigma(n)},x_{n+1}),
\end{equation*}
where $\e(\sigma)$ is a Koszul sign,
$\sigma$ is an $(i,n-i)$-unshuffle
and $(x_{1},...,x_{n+1})\in V^{\ot(n+1)}$.
Then the pair $(\bar{T}V,\Delta)$ becomes the cofree nilpotent
dual-Leibniz coalgebra over $V$.\\
\indent
Let $\Coder(\bar{T}V)$ be the space of coderivations
on the coalgebra, i.e.,
$D^{c}\in\Coder(\bar{T}V)$ satisfies
$$
\Delta D^{c}=(D^{c}\ot 1)\Delta+(1\ot D^{c})\Delta.
$$
By a standard argument, we have
$\Coder(\bar{T}V)\cong\Hom(\bar{T}V,V)$ (cf. \cite{MSS}).
We recall an explicit formula of the isomorphism.
Let $f:V^{\ot i}\to V$ be an $i$-ary linear map.
It is one of the generators in $\Hom(\bar{T}V,V)$.
The coderivation associated with $f$
is defined by $f^{c}(V^{\ot n<i}):=0$ and
\begin{multline*}
f^{c}(x_{1},...,x_{n\ge i}):=
\sum^{n}_{k=i}\sum_{\sigma}
\e(\sigma)(-1)^{|f|(x_{\sigma(1)}+\cdot\cdot\cdot+x_{\sigma(k-i)})}\\
(x_{\sigma(1)},...,x_{\sigma(k-i)},
f(x_{\sigma(k+1-i)},...,x_{\sigma(k-1)},x_{k}),x_{k+1},...,x_{n}),
\end{multline*}
where $\sigma$ is a $(k-i,i-1)$-unshuffle.
The inverse of the mapping $f\mapsto f^{c}$
is the (co)restriction.\\
\indent
If $f,g\in\Hom(\bar{T}V,V)$ are $i$-ary, $j$-ary multilinear
maps respectively, then
$$
[f^{c},g^{c}]=(f,g)^{c},
$$
where $[f^{c},g^{c}]$ is the canonical commutator (Lie bracket)
on $\Coder(\bar{T}V)$
and where $(f,g)$ is an $(i+j-1)$-ary multilinear map.
Since the mapping $f\mapsto f^{c}$ is an isomorphism,
$(f,g)$ defines a Lie bracket on $\Hom(\bar{T}V,V)$.\\
\indent
In the sequel, we will identify
$\Coder(\bar{T}V)$ with $\Hom(\bar{T}V,V)$
as a Lie algebra. We sometimes omit
the superscript ``$c$" from $f^{c}$.\\
\indent
Given a graded vector space $V$,
an $i$-ary  $i$-multilinear $V$-valued operation
$l_{i}$ on $V$ of degree $2-i$
determines 
a degree $1$ 
element in $\Hom(\bar{T}sV,sV)$.
The following proposition provides an alternative
definition of sh Leibniz algebras.
\begin{proposition}(\cite{AP})
Let $V$ be a graded vector space endowed with
a system $\{l_{i}\}_{i\in\mathbb{N}}$ 
of $i$-ary $i$-multilinear $V$-valued operations,
the operation $\{l_{i}\}_{i\in\mathbb{N}}$ having degree $2-i$
and,  for each $i\ge 1$, let 
$$
\pa_{i}:=s^{-1}\c l_{i}\c(s\ot\cdot\cdot\cdot\ot s),
$$
by construction of degree $+1$, viewed
as a member of $\Coder(\bar{T}V)$ via the identifications
$\bar{T}V\cong\bar{T}s^{-1}(sV)$.
Define the coderivation $\pa$ by
$$
\pa:=\pa_{1}+\pa_{2}+\cdot\cdot\cdot.
$$
The system $(sV,l_{1},l_{2},...)$ is an sh Leibniz algebra
if and only if
$$
\frac{1}{2}[\pa,\pa]=0
$$
or equivalently, $\pa\pa=0$.
\end{proposition}
\subsection{The key Lemma}
Let $(V,\{,\})$ be a Leibniz algebra.
We consider a collection of maps:
$$
\Der(V)\to\Hom(\bar{T}V,V)\cong\Coder(\bar{T}V),
\ \ D\mapsto N_{i}D\cong N_{i}^{c}D,
$$
where $N_{i}D$ was defined in Section 3
and where $N_{\cdot}^{c}D$
is the coderivation associated with $N_{\cdot}D$.
Theorem \ref{them} is a consequence of the following
\begin{lemma}\label{key1}
For any derivations $D,D^{\p}\in\Der(V)$
and for any $i,j\ge 1$, the following identity holds.
$$
N_{i+j-1}[D,D^{\p}]=(N_{i}D,N_{j}D^{\p}),
$$
or equivalently,
$$
N_{i+j-1}^{c}[D,D^{\p}]=[N_{i}^{c}D,N_{j}^{c}D^{\p}].
$$
\end{lemma}
\begin{proof}
We show the case of $i=1$.
The general case will be shown in Section 6.
We have
\begin{eqnarray*}
N_{j}[D,D^{\p}]&=&N_{j}([D,D^{\p}]\ot 1\ot\cdot\cdot\cdot\ot 1)\\
&=&N_{j}(DD^{\p}\ot 1\ot\cdot\cdot\cdot\ot 1)-
(-1)^{DD^{\p}}N_{j}(D^{\p}D\ot 1\ot\cdot\cdot\cdot\ot 1).
\end{eqnarray*}
By the derivation property, we have
$N_{j}(DD^{\p}\ot 1\ot\cdot\cdot\cdot\ot 1)=$
$$
DN_{j}(D^{\p}\ot 1\ot\cdot\cdot\cdot\ot 1)
-(-1)^{DD^{\p}}\sum^{j}_{k\ge 2}
N_{j}(D^{\p}\ot 1\ot\cdot\cdot\cdot\ot 1\ot D^{(k}
\ot 1\ot \cdot\cdot\cdot\ot 1).
$$
Hence we obtain $N_{j}[D,D^{\p}]=$
$$
DN_{j}(D^{\p}\ot 1\ot\cdot\cdot\cdot\ot 1)
-(-1)^{DD^{\p}}\sum^{j}_{k\ge 1}
N_{j}(D^{\p}\ot 1\ot\cdot\cdot\cdot\ot 1\ot
D^{(k}\ot 1\ot \cdot\cdot\cdot\ot 1),
$$
which is equal to
$N_{j}[D,D^{\p}]=(N_{1}D,N_{j}D^{\p})$ because $N_{1}D=D$.
\end{proof}
The higher derived brackets are elements in $\Hom(\bar{T}sV,sV)$.
Hence they correspond to the coderivations in $\Coder(\bar{T}V)$,
via the maps,
$$
\Hom(\bar{T}sV,sV)\overset{\text{shift}}{\sim}
\Hom(\bar{T}V,V)\cong\Coder(\bar{T}V).
$$
\begin{lemma}\label{key2}
Let $\pa_{i}$ be the coderivation associated with
the $i$-ary derived bracket.
It has the following form,
$$
\pa_{i}=N_{i}^{c}\delta_{i-1}.
$$
\end{lemma}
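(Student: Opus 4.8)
The plan is to unwind the definition of $\pa_i$ and collapse everything onto the identity (\ref{classicalone}). Recall from the bar/coalgebra construction that $\pa_i = s^{-1}\c l_i\c(s\ot\cdots\ot s) = s^{-1}\c[\cdot]_d\c\textbf{s}(i)$, where $l_i$ is the $i$-ary derived bracket viewed as a map $(sV)^{\ot i}\to sV$. Because the passage $f\mapsto f^c$ from generating maps in $\Hom(\bar{T}V,V)$ to coderivations in $\Coder(\bar{T}V)$ is an isomorphism, it is enough to establish $\pa_i=N_i\delta_{i-1}$ at the level of the generating $i$-ary maps; that is, I would check that the two sides agree on an arbitrary $(x_1,\dots,x_i)\in V^{\ot i}$.

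First I would evaluate the left-hand side. Feeding $x_1\ot\cdots\ot x_i$ through $\textbf{s}(i)=s\ot\cdots\ot s$ and commuting each copy of $s$ (of degree $+1$) past the arguments to its left produces, by the Koszul convention, the sign $(-1)^{\sum_{k}(i-k)|x_k|}$. A parity count shows that this is exactly the sign $(\pm)$ recorded in (\ref{classicalone}): it reduces to $(-1)^{x_1+x_3+\cdots}$ when $i$ is even and to $(-1)^{x_2+x_4+\cdots}$ when $i$ is odd. This is the very identification $sx_1\ot\cdots\ot sx_i=(\pm)\,\textbf{s}(i)(x_1\ot\cdots\ot x_i)$ already used in the proof of (\ref{classicalone}). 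Hence $\pa_i(x_1,\dots,x_i)=(\pm)\,s^{-1}[sx_1,\dots,sx_i]_d$.

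Next I would substitute (\ref{classicalone}) itself, i.e. $s^{-1}[sx_1,\dots,sx_i]_d=(\pm)[\delta_{i-1}x_1,\dots,x_i]$ with the same sign $(\pm)$. The two occurrences of $(\pm)$ multiply to $+1$, so they cancel and leave $\pa_i(x_1,\dots,x_i)=[\delta_{i-1}x_1,\dots,x_i]=N_i\delta_{i-1}(x_1,\dots,x_i)$. Since this holds for every input, $\pa_i=N_i\delta_{i-1}$ as elements of $\Hom(\bar{T}V,V)$, hence as coderivations, which is the claim.

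The argument is essentially sign bookkeeping, and I do not anticipate a genuine obstacle. The single delicate point is verifying that the Koszul sign generated by $\textbf{s}(i)$ is literally the sign $(\pm)$ of (\ref{classicalone}), so that the two cancel and one obtains $N_i\delta_{i-1}$ exactly, rather than up to an input-dependent sign. This cancellation is forced because both signs arise from the same act of moving the shift operators past the $x_k$, which is the conceptual reason the de-shifted $i$-ary derived bracket is precisely $N_i\delta_{i-1}$.
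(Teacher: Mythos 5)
Your proof is correct and follows essentially the same route as the paper: the paper verifies $\pa_i=N_i\delta_{i-1}$ by the same unwinding of the definition and Koszul-sign bookkeeping for the shift operators, only carried out at the operator level ($s^{-1}\c s$ cancellation and commuting $\delta_{i-1}$ past the $s^{-1}$'s) rather than by evaluating on elements. Your observation that the sign produced by $\textbf{s}(i)$ is exactly the $(\pm)$ of (\ref{classicalone}), so the two occurrences cancel, is the same cancellation the paper effects with the prefactor $(-1)^{(i-1)(i-2)/2}$.
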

\begin{proof}
\begin{eqnarray*}
\pa_{i}&:=&s^{-1}\c l_{i}\c(s\ot\cdot\cdot\cdot\ot s)\\
&=&(-1)^{\frac{(i-1)(i-2)}{2}}
N_{i}\c(s^{-1}\ot\cdot\cdot\cdot\ot s^{-1})\c
(s\delta_{i-1}\ot s\ot\cdot\cdot\cdot\ot s)\\
&=&(-1)^{\frac{(i-1)(i-2)}{2}}N_{i}\c
(\delta_{i-1}\ot s^{-1}\ot\cdot\cdot\cdot\ot s^{-1})\c
(1\ot s\ot\cdot\cdot\cdot\ot s)\\
&=&N_{i}\delta_{i-1}.
\end{eqnarray*}
Hence $\pa_{i}=N_{i}^{c}\delta_{i-1}$ as a coderivation.
\end{proof}
Now, we give a proof of Theorem \ref{them}.
\begin{proof}
By Lemma \ref{key2}, the differential
$\delta_{t}=\sum t^{i}\delta_{i}$
corresponds to the coderivation:
$$
\pa:=\pa_{1}+\pa_{2}+\pa_{3}+\cdot\cdot\cdot.
$$
By Lemmas \ref{key1},
the deformation condition $[d,d]=0$
corresponds to the homotopy algebra condition,
$$
\sum_{i+j=Const}[\pa_{i},\pa_{j}]=
\sum_{i+j=Const}[N_{i}^{c}\delta_{i-1},N_{j}^{c}\delta_{j-1}]=
N_{i+j-1}^{c}\sum_{i+j=Const}[\delta_{i-1},\delta_{j-1}]=0.
$$
\end{proof}
\begin{remark}(cf. Lemma \ref{key1})
We consider the case of the trivial deformation, that is,
$\delta_{t}=t\delta_{1}$.
In this case, the induced sh Leibniz algebra
is an ordinary Leibniz algebra.
We put $CL^{n}(sV):=\Hom(V^{\ot n},V)$ and $b(-):=(\pa_{2},-)$.
Then $(CL^{*}(sV),b)$ is the Leibniz
cohomology complex (\cite{L1}).
The key Lemma implies that $\Der(V)$ provides
a subcomplex of the Leibniz complex:
$$
N_{i}\Der(V)\subset CL^{i}(sV),
$$
because
$(\pa_{2},N_{i}D)=(N_{2}\delta_{1},N_{i}D)=N_{i+1}[\delta_{1},D]$.
If $\delta_{1}$ is an adjoint representation, i.e.,
$\delta_{1}:=ad(\theta):=[\theta,-]$ for some $\theta\in V$,
then $N_{i}ad(V)$ is also a subcomplex,
$$
N_{i}ad(V)\subset N_{i}\Der(V)\subset CL^{i}(sV).
$$
\end{remark}
\section{Deformation theory}
In this section, we discuss the connection
between deformation theory and sh Leibniz algebras.
The deformation $\delta_{t}$
is considered to be a differential on $V[[t]]$,
which is a Leibniz algebra of formal series
with coefficients in $V$. Let $t\xi_{1}\in\Der(V[[t]])$
be a derivation with the degree $0$.
We consider a transformation,
$$
\delta_{t}^{\p}:=exp(X_{t\xi_{1}})(\delta_{t}),
$$
where $X_{t\xi_{1}}:=[\cdot,t\xi_{1}]$.
By a standard argument,
$\delta_{t}^{\p}$ is also a deformation of $\delta_{0}$.
We have
\begin{eqnarray*}
\delta^{\p}_{0}&=&\delta_{0},\\
\delta^{\p}_{1}&=&\delta_{1}+[\delta_{0},\xi_{1}],\\
\delta^{\p}_{2}&=&\delta_{2}+[\delta_{1},\xi_{1}]+
\frac{1}{2!}[[\delta_{0},\xi_{1}],\xi_{1}],\\
...&...&...\\
\delta^{\p}_{i}&=&\sum^{i}_{n=0}\frac{1}{(i-n)!}X^{i-n}_{\xi_{1}}(\delta_{n}).
\end{eqnarray*}
The collection $\{\delta^{\p}_{i}\}_{i\in\mathbb{N}}$
induces an sh Leibniz algebra structure
$\pa^{\p}=\sum \pa^{\p}_{i}$,
via the higher derived bracket construction.
From Lemmas \ref{key1}, \ref{key2}, we have
$$
\pa^{\p}_{i+1}=N_{i+1}^{c}\delta^{\p}_{i}=
\sum^{i}_{n=0}\frac{1}{(i-n)!}X^{i-n}_{N_{2}^{c}\xi_{1}}(\pa_{n+1}).
$$
Therefore we obtain
$$
\pa^{\p}=exp(X_{N_{2}^{c}\xi_{1}})(\pa),
$$
which implies that $\pa^{\p}$ is equivalent to $\pa$.
We consider a general case.
Let $\xi_{t}:=t\xi_{1}+t^{2}\xi_{2}+\cdot\cdot\cdot$
be a derivation on $V[[t]]$ with degree $0$.
The transformation (\ref{defgauge}) below
is called a {\em gauge transformation}.
\begin{equation}\label{defgauge}
\delta_{t}^{\p}:=exp(X_{\xi_{t}})(\delta_{t}).
\end{equation}
\begin{proposition}\label{lastprop}
(I) If two deformations of $\delta_{0}$
are gauge equivalent, or related via the gauge transformation,
then the induced sh Leibniz algebra structures are equivalent
to each other, i.e., the codifferential $\pa^{\p}$
induced by $\delta_{t}^{\p}$
is related with $\pa$ via the transformation,
\begin{equation}\label{eqpa}
\pa^{\p}=exp(X_{\Xi})(\pa),
\end{equation}
where $\Xi$ is a coderivation,
$$
\Xi:=N_{2}^{c}\xi_{1}+N_{3}^{c}\xi_{2}+\cdot\cdot\cdot
+N_{i+1}^{c}\xi_{i}+\cdot\cdot\cdot.
$$
(II) The exponential of $\Xi$,
$$
e^{\Xi}:=1+\Xi+\frac{1}{2!}\Xi^{2}
+\cdot\cdot\cdot,
$$
is a dg coalgebra isomorphism between
$(\bar{T}V,\pa)$ and $(\bar{T}V,\pa^{\p})$, namely,
(\ref{rlas1}) and (\ref{rlas2}) below hold. 
\begin{eqnarray}
\label{rlas1}\pa^{\p}&=&e^{-\Xi}\cdot\pa\cdot e^{\Xi},\\
\label{rlas2}\Delta e^{\Xi}&=&(e^{\Xi}\ot e^{\Xi})\Delta.
\end{eqnarray}
\end{proposition}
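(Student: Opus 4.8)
The plan is to recognize the assignment $\delta\mapsto N\delta$ as a morphism of graded Lie algebras and to transport the gauge action of part~(I) through it, after which part~(II) reduces to two standard facts about exponentials of coderivations. Write $d=\sum_{i\ge 0}t^{i}\delta_{i}$ and $h(t)=\sum_{i\ge 1}t^{i}h_{i}$ with $\delta_{i},h_{i}\in\Der(V)$, and define $N\colon\Der(V)[[t]]\to\Coder(\bar{T}V)$ by $N\big(\sum_{i}t^{i}D_{i}\big):=\sum_{i}N_{i+1}D_{i}$; this is well defined since the $i$-th summand lives in tensor-arity $i+1$, so each arity component of the image is a single term. By Lemma~\ref{key2} one has $Nd=\pa$, and $N(h(t))$ coincides with the sum $Nh$ of the statement. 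The first step is to verify that $N$ respects brackets, $N[D,E]=[ND,NE]$. Expanding $[D,E]=\sum_{k}t^{k}\sum_{i+j=k}[D_{i},E_{j}]$ and using linearity of each $N_{k+1}$, this reduces termwise to $N_{i+j+1}[D_{i},E_{j}]=[N_{i+1}D_{i},N_{j+1}E_{j}]$, which is exactly Lemma~\ref{key1} with arities $(i+1,j+1)$. Thus $N$ is a Lie algebra morphism.

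For part~(I) I would then use the elementary fact that a Lie morphism $\phi$ intertwines the exponentiated adjoint action: iterating $\phi(X_{a}(b))=\phi([b,a])=[\phi(b),\phi(a)]=X_{\phi(a)}(\phi(b))$ gives $\phi(\exp(X_{a})(b))=\exp(X_{\phi(a)})(\phi(b))$. Applying this to $\phi=N$, $a=h(t)$, $b=d$, and using $N(d')=\pa^{\p}$ (again by Lemma~\ref{key2}), turns the gauge transformation $d'=\exp(X_{h(t)})(d)$ into $\pa^{\p}=\exp(X_{Nh})(\pa)$, which is~(\ref{eqpa}). Convergence is controlled by a single filtration: $N$ carries $t$-order $k$ to tensor-arity $k+1$, so the $t$-adic convergence of $\exp(X_{h(t)})$ (valid because $h(t)$ has $t$-order $\ge 1$, whence each application of $X_{h(t)}$ raises $t$-order by at least $1$) becomes finiteness in each arity on the coderivation side; in particular $Nh$ and $e^{Nh}$ receive only finitely many contributions in every tensor-degree. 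This recovers, and extends from $h(t)=th$ to general $h(t)$, the computation preceding the statement.

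For part~(II) the key observation is that $Nh$ has degree $0$: each $h_{i}$ does and $N_{i+1}$ preserves degree, so $Nh$ is an \emph{even} coderivation. Then~(\ref{rlas1}) is the Hadamard identity in the associative algebra $\End(\bar{T}V)$: for an even element $a$ one has $e^{-a}\cdot\pa\cdot e^{a}=\sum_{n}\tfrac{1}{n!}X_{a}^{n}(\pa)=\exp(X_{a})(\pa)$ with $X_{a}=[\cdot,a]$, and for $a=Nh$ this combines with~(\ref{eqpa}) to give $\pa^{\p}=e^{-Nh}\cdot\pa\cdot e^{Nh}$. Equation~(\ref{rlas2}) is the standard statement that the exponential of a coderivation integrates to a coalgebra automorphism: from $\Delta(Nh)=(Nh\ot 1+1\ot Nh)\Delta$ one proves $\Delta(Nh)^{n}=(Nh\ot 1+1\ot Nh)^{n}\Delta$ by induction, hence $\Delta e^{Nh}=e^{Nh\ot 1+1\ot Nh}\Delta$; evenness of $Nh$ makes $Nh\ot 1$ and $1\ot Nh$ commute and makes $e^{Nh}$ even, so the right-hand factor splits as $(e^{Nh}\ot 1)(1\ot e^{Nh})=e^{Nh}\ot e^{Nh}$.

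The genuinely substantive input is Lemma~\ref{key1}; granting it, the proposition is essentially formal, so the remaining work is bookkeeping rather than conceptual. The point I expect to demand the most care is the sign and degree accounting in part~(II): one must confirm that $X_{Nh}=[\cdot,Nh]$ exponentiates to conjugation in the order $e^{-Nh}(\cdot)\,e^{Nh}$ and not its inverse, which is exactly where evenness of $Nh$ is used, and one must make the filtration argument precise enough that every exponential converges and that $N$ is defined on the relevant (completed) Lie algebra. As a built-in consistency check, (\ref{rlas1}) forces $(\pa^{\p})^{2}=e^{-Nh}\cdot\pa^{2}\cdot e^{Nh}=0$, so $\pa^{\p}$ is automatically a codifferential, recovering the fact that $d'$ is again a differential.
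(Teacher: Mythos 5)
Your proposal is correct and follows essentially the same route as the paper: part (I) is Lemma \ref{key1} applied termwise to the expansion of $exp(X_{h(t)})(d)$ (your ``$N$ is a Lie algebra morphism'' packaging is just a cleaner statement of the paper's explicit term-by-term computation of $\delta^{\p}_{n}$ and $\pa^{\p}_{n+1}$), and part (II) rests on the same two facts the paper invokes, namely that the even coderivation $Nh$ is locally finite in each tensor arity so that $e^{Nh}$ converges and conjugation realizes $exp(X_{Nh})$, and that exponentials of coderivations are coalgebra morphisms. You supply more detail than the paper does at the ``direct computation'' steps of (II), but no new idea is involved.
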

The notion of sh Leibniz algebra homomorphism
is defined to be a map satisfying (\ref{rlas1}) and (\ref{rlas2}).
Thus (II) says that $e^{\Xi}$ is an sh Leibniz algebra
isomorphism.
\begin{proof}
(I) From (\ref{defgauge}) we have
$$
\delta^{\p}_{n}=\delta_{n}+\sum_{n=i+j}[\delta_{i},\xi_{j}]
+\frac{1}{2!}\sum_{n=i+j+k}[[\delta_{i},\xi_{j}],\xi_{k}]+
\cdot\cdot\cdot.
$$
Hence we obtain $\pa^{\p}_{n+1}=N_{n+1}^{c}\delta^{\p}_{n}=$
\begin{multline*}
N_{n+1}^{c}\delta_{n}+
\sum_{n=i+j}N_{n+1}^{c}[\delta_{i},\xi_{j}]
+\frac{1}{2!}\sum_{n=i+j+k}
N_{n+1}^{c}[[\delta_{i},\xi_{j}],\xi_{k}]+\cdot\cdot\cdot=\\
\pa_{n+1}+\sum_{n=i+j}[\pa_{i+1},N_{j+1}^{c}\xi_{j}]
+\frac{1}{2!}\sum_{n=i+j+k}
[[\pa_{i+1},N_{j+1}^{c}\xi_{j}],N_{k+1}^{c}\xi_{k}]
+\cdot\cdot\cdot.
\end{multline*}
This gives (\ref{eqpa}).
\medskip\\
\noindent
(II)
The exponential $e^{\Xi}$ is well-defined as an isomorphism
on $\bar{T}V$, because
$e^{\Xi}$ is finite on $V^{\ot n}$ for each $n$.
For instance, on $V^{\ot 3}$,
$$
e^{\Xi}\equiv
1+(N_{2}^{c}\xi_{1}+N_{3}^{c}\xi_{2})+\frac{1}{2}(N_{2}^{c}\xi_{1})^{2}.
$$
By a direct computation, one can prove that
$$
exp(X_{\Xi})(\pa)=e^{-\Xi}\cdot\pa\cdot e^{\Xi}.
$$
Thus (\ref{rlas1}) holds.
Since $\Xi$ is a coderivation,
$e^{\Xi}$ satisfies (\ref{rlas2}).
\end{proof}
\section{Proof of Lemma \ref{key1}}
\begin{claim}\label{fclaim}
Let $f:V^{\ot i}\to V$ be an $i$-ary linear map.
For each $n$, we define $f^{(k)}:V^{\ot n}\to V^{\ot(n-i+1)}$ by
\begin{multline*}
f^{(k)}(x_{1},...,x_{n}):=
\sum_{\sigma}
\e(\sigma)(-1)^{|f|(x_{\sigma(1)}+\cdot\cdot\cdot+x_{\sigma(k-i)})}\\
(x_{\sigma(1)},...,x_{\sigma(k-i)},
f(x_{\sigma(k+1-i)},...,x_{\sigma(k-1)},x_{k}),x_{k+1},...,x_{n}).
\end{multline*}
Then the coderivation associated with $f$
decomposes as :
$$
f^{c}=\sum_{k\ge i}f^{(k)}.
$$
\end{claim}
In Section 4.2, we established the lemma for $i=1$.
We assume the identity of the lemma
and prove the case of $i+1$, i.e.,
$N_{i+j}[D,D^{\p}]=(N_{i+1}D,N_{j}D^{\p})$,
or equivalently,
$N^{c}_{i+j}[D,D^{\p}]=[N_{i+1}^{c}D,N_{j}^{c}D^{\p}]$.\\
\indent
We put $\x:=(x_{1},...,x_{i+j-1})$.
From the definition of $N_{\cdot}D$, we have
$$
N_{i+j}^{c}[D,D^{\p}](\x,x_{i+j})=
\{N_{i+j-1}^{c}[D,D^{\p}](\x),x_{i+j}\}.
$$
The assumption of the induction yields that
\begin{eqnarray*}
N_{i+j}^{c}[D,D^{\p}](\x,x_{i+j})&=&
\{[N_{i}^{c}D,N_{j}^{c}D^{\p}](\x),x_{i+j}\}\\
&=&\{N_{i}^{c}D\c N_{j}^{c}D^{\p}(\x),x_{i+j}\}
-(-1)^{DD^{\p}}\{N_{j}^{c}D^{\p}\c N_{i}^{c}D(\x),x_{i+j}\}.
\end{eqnarray*}
Claim \ref{fclaim} derives
$$
N_{j}^{c}D^{\p}=\sum_{k\ge j}N_{j}^{(k)}D^{\p},
$$
which gives
\begin{equation}\label{railgun}
N_{i+j}^{c}[D,D^{\p}](\x,x_{i+j})=
\sum^{i+j-1}_{k=j}\{N_{i}^{c}D\c N_{j}^{(k)}D^{\p}(\x),x_{i+j}\}
-(-1)^{DD^{\p}}\{N_{j}^{c}D^{\p}\c N_{i}^{c}D(\x),x_{i+j}\}.
\end{equation}
The first term of (\ref{railgun}) becomes
\begin{eqnarray*}
\sum^{i+j-1}_{k=j}\{N_{i}^{c}D\c N_{j}^{(k)}D^{\p}(\x),x_{i+j}\}&=&
\sum^{i+j-1}_{k=j}N_{i+1}^{c}D\c N_{j}^{(k)}D^{\p}(\x,x_{i+j})\\
&=&
N_{i+1}^{c}D\c N_{j}^{c}D^{\p}(\x,x_{i+j})
-N_{i+1}^{c}D\c N_{j}^{(i+j)}D^{\p}(\x,x_{i+j}),
\end{eqnarray*}
because the coderivation preserves the position of
the most right component $x_{i+j}$.
So it suffices to show that
\begin{multline}\label{ap1}
-(-1)^{DD^{\p}}\{N_{j}^{c}D^{\p}\c N_{i}^{c}D(\x),x_{i+j}\}
=\\
N_{i+1}^{c}D\c N_{j}^{(i+j)}D^{\p}(\x,x_{i+j})
-(-1)^{DD^{\p}}N_{j}^{c}D^{\p}\c N_{i+1}^{c}D(\x,x_{i+j}).
\end{multline}
We need a lemma.
\begin{lemma}\label{themlem1}
For any elements in the Leibniz algebra,
$A,B,y_{1},...,y_{n}\in V$,
\begin{multline*}
N_{n+2}(A,B,y_{1},...,y_{n})=
-(-1)^{AB}\{B,N_{n+1}(A,y_{1},...,y_{n})\}+\\
\sum^{n}_{a=1}(-1)^{B(y_{1}+\cdot\cdot\cdot+y_{a-1})}
N_{n+1}(A,y_{1},...,y_{a-1},\{B,y_{a}\},
y_{a+1},...,y_{n}).
\end{multline*}
\end{lemma}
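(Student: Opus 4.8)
The plan is to prove the identity by induction on $n$, using nothing more than the graded Leibniz identity together with the defining recursion of the left-nested bracket, namely that appending an element on the right is a single bracketing: $[z_1,\ldots,z_m,w]=[[z_1,\ldots,z_m],w]$. For the base case $n=1$ the asserted formula reads
\[
[[A,B],y_1]=[A,[B,y_1]]-(-1)^{AB}[B,[A,y_1]],
\]
which is exactly the graded Leibniz identity (with $x=A$, $y=B$, $z=y_1$) after transposing one term, so there is nothing to do.

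For the inductive step I would assume the statement for $n-1$ and expand $[A,B,y_1,\ldots,y_n]=[\,[A,B,y_1,\ldots,y_{n-1}],y_n\,]$. Substituting the induction hypothesis for the inner bracket and using that $[-,y_n]$ is linear, the summands indexed by $a=1,\ldots,n-1$ merely acquire a trailing $y_n$: since $[\,[A,y_1,\ldots,[B,y_a],\ldots,y_{n-1}],y_n\,]=[A,y_1,\ldots,[B,y_a],\ldots,y_{n-1},y_n]$ and their Koszul signs $(-1)^{B(y_1+\ldots+y_{a-1})}$ do not involve $y_n$, these reproduce verbatim the terms $a=1,\ldots,n-1$ of the target. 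The only remaining contribution comes from the term $-(-1)^{AB}[B,[A,y_1,\ldots,y_{n-1}]]$, which after bracketing with $y_n$ becomes $-(-1)^{AB}[[B,W],y_n]$ where I abbreviate $W:=[A,y_1,\ldots,y_{n-1}]$.

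To close the argument I would apply the graded Leibniz identity once more to $[[B,W],y_n]=[B,[W,y_n]]-(-1)^{BW}[W,[B,y_n]]$. Here $[W,y_n]=[A,y_1,\ldots,y_n]$, so the first piece delivers precisely $-(-1)^{AB}[B,[A,y_1,\ldots,y_n]]$, the leading term of the target; and $[W,[B,y_n]]=[A,y_1,\ldots,y_{n-1},[B,y_n]]$ is exactly the missing $a=n$ summand. I expect the sole delicate point to be the sign of this last term: its coefficient is $(-1)^{AB}(-1)^{BW}$, and since $|W|=|A|+|y_1|+\ldots+|y_{n-1}|$, the factors $(-1)^{AB}$ and $(-1)^{BA}$ cancel, leaving the required $(-1)^{B(y_1+\ldots+y_{n-1})}$. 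Beyond this Koszul-sign bookkeeping there is no real obstacle, since both nontrivial moves are just single invocations of the Leibniz identity; the main risk is simply mismatching a sign or losing track of the summation index when the trailing $y_n$ is attached.
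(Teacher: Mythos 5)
Your proof is correct, and the sign bookkeeping at the delicate step checks out: with $|W|=|A|+|y_1|+\cdots+|y_{n-1}|$ one indeed gets $(-1)^{AB}(-1)^{BW}=(-1)^{B(y_1+\cdots+y_{n-1})}$, which is the coefficient of the $a=n$ summand. The paper offers no argument at all here --- its proof reads only ``Immediately'' --- so there is nothing to compare against; your induction on $n$, using the left-nesting convention $[z_1,\ldots,z_m,w]=[[z_1,\ldots,z_m],w]$ and two invocations of the graded (left) Leibniz identity $[[x,y],z]=[x,[y,z]]-(-1)^{xy}[y,[x,z]]$, is exactly the computation the author is suppressing, and it supplies the missing details completely.
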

\begin{proof}
We show the case of $n=2$.
Up to sign,
\begin{eqnarray*}
\{B,\{\{A,y_{1}\},y_{2}\}\}&=&
\{\{B,\{A,y_{1}\}\},y_{2}\}+\{\{A,y_{1}\},\{B,y_{2}\}\}\\
&=&
\{\{\{B,A\},y_{1}\},y_{2}\}+\{\{A,\{B,y_{1}\}\},y_{2}\}
+\{\{A,y_{1}\},\{B,y_{2}\}\}\\
&=&
-\{\{\{A,B\},y_{1}\},y_{2}\}+\{\{A,\{B,y_{1}\}\},y_{2}\}
+\{\{A,y_{1}\},\{B,y_{2}\}\},
\end{eqnarray*}
where $-\{\{A,B\},y_{1}\}=\{\{B,A\},y_{1}\}$ is used.
Thus we obtain
$$
\{B,N_{3}(A,y_{1},y_{2})\}=-N_{4}(A,B,y_{1},y_{2})+
N_{3}(A,\{B,y_{1}\},y_{2})+N_{3}(A,y_{1},\{B,y_{2}\}).
$$
\end{proof}
We prove (\ref{ap1}).
By the definition of coderivation,
\begin{multline*}
N_{i}^{c}D(\x)=
\sum^{i+j-1}_{k=i}\sum_{\sigma}E(\sigma,k-i)\\
(x_{\sigma(1)},...,x_{\sigma(k-i)},N_{i}(Dx_{\sigma(k+1-i)
},...,x_{\sigma(k-1)},x_{k}),x_{k+1},...,x_{i+j-1}),
\end{multline*}
where
$$
E(\sigma,*):=
\e(\sigma)(-1)^{D(x_{\sigma(1)}+\cdot\cdot\cdot+x_{\sigma(*)})}.
$$
Since $N_{n}(x_{1},...,x_{n})=\{\{\{x_{1},x_{2}\},..,\},x_{n}\}$,
$$
N_{n}(x_{1},...,x_{n})=N_{n-i+1}(N_{i}(x_{1},...,x_{i}),x_{i+1},...,x_{n}),
$$
which gives
$S:=-(-1)^{DD^{\p}}\{N_{j}^{c}D^{\p}\c N_{i}^{c}D(\x),x_{i+j}\}=$
\begin{multline*}
-(-1)^{DD^{\p}}\sum^{i+j-1}_{k=i}\sum_{\sigma}E(\sigma,k-i)\\
N_{i+j-k+2}
\big(N_{k-i}(D^{\p}x_{\sigma(1)},...,x_{\sigma(k-i)}),N_{i}(Dx_{\sigma(k+1-i)
},...,x_{\sigma(k-1)},x_{k}),x_{k+1},...,x_{i+j}\big).
\end{multline*}
We put $A:=N_{k-i}(D^{\p}x_{\sigma(1)},...,x_{\sigma(k-i)})$
and $B:=N_{i}(Dx_{\sigma(k+1-i)
},...,x_{\sigma(k-1)},x_{k})$,
then from Lemma \ref{themlem1},
\begin{equation}\label{s=t+u}
S=T+U,
\end{equation}
where
\begin{multline*}
T:=-(-1)^{DD^{\p}}\sum^{i+j-1}_{k=i}\sum_{\sigma}E(\sigma,k-i)E_{1}\\
N_{i+1}\big(Dx_{\sigma(k+1-i)},...,x_{\sigma(k-1)},x_{k},
N_{j}(D^{\p}x_{\sigma(1)},...,x_{\sigma(k-i)},
x_{k+1},...,x_{i+j})\big),
\end{multline*}
\begin{multline*}
U:=-(-1)^{DD^{\p}}\sum^{i+j-1}_{k=i}\sum_{\sigma}
\sum^{i+j-k}_{a=1}E(\sigma,k-i)E_{2}\\
N_{j}(D^{\p}x_{\sigma(1)},...,x_{\sigma(k-i)},
x_{k+1},...,x_{k+a-1},
N_{i+1}(Dx_{\sigma(k+1-i)},...,
x_{\sigma(k-1)},x_{k},x_{k+a}),x_{k+a+1},...,x_{i+j}),
\end{multline*}
where $E_{1}$ and $E_{2}$ are appropriate signs
given by the manner in the lemma above.
\medskip\\
\noindent
\textbf{(I)}
We show the identity,
\begin{equation}\label{newdeft}
T=N_{i+1}^{c}D\c N_{j}^{(i+j)}D^{\p}(\x,x_{i+j}).
\end{equation}
We replace $\sigma$ in T with
an unshuffle permutation $\tau$ along the table,
\begin{center}
\begin{tabular}{ccccccc}
\hline
$\sigma(k+1-i)$&...&$\sigma(k-1)$&$k$&$\sigma(1)$
&...&$\sigma(k-i)$ \\
\hline
$\tau(1)$&...&$\tau(i-1)$&$\tau(i)$&$\tau(i+1)$
&...&$\tau(k)$ \\
\hline
\end{tabular}
\end{center}
Then Koszul sign is replaced with $\e(\tau)$:
$$
\e(\tau)=\e(\sigma)
(-1)^{(x_{\sigma(1)}+\cdot\cdot\cdot+x_{\sigma(k-i)})
(x_{\sigma(k+1-i)}+\cdot\cdot\cdot+x_{\sigma(k-1)}+x_{k})},
$$
and then $E(\sigma,k-i)E_{1}=$
\begin{eqnarray*}
&=&-\e(\sigma)(-1)^{D(x_{\sigma(1)}+\cdot\cdot\cdot+x_{\sigma(k-i)})}
(-1)^{AB}\\
&=&-\e(\sigma)(-1)^{D(x_{\sigma(1)}+\cdot\cdot\cdot+x_{\sigma(k-i)})}
(-1)^{(x_{\sigma(1)}+\cdot\cdot\cdot+x_{\sigma(k-i)}+D^{\p})
(x_{\sigma(k+1-i)}+\cdot\cdot\cdot+x_{\sigma(k-1)}+x_{k}+D)}\\
&=&-\e(\sigma)
(-1)^{(x_{\sigma(1)}+\cdot\cdot\cdot+x_{\sigma(k-i)})(x_{\sigma(k+1-i)}+\cdot\cdot\cdot+x_{\sigma(k-1)}+x_{k})}
(-1)^{D^{\p}(x_{\sigma(k+1-i)}+\cdot\cdot\cdot+x_{\sigma(k-1)}+x_{k})+DD^{\p}}\\
&=&-\e(\tau)
(-1)^{D^{\p}(x_{\sigma(k+1-i)}+\cdot\cdot\cdot+x_{\sigma(k-1)}+x_{k})+DD^{\p}}\\
&=&-\e(\tau)
(-1)^{D^{\p}(x_{\tau(1)}+\cdot\cdot\cdot+x_{\tau(i-1)}+x_{\tau(i)})+DD^{\p}}=
-E^{\p}(\tau,i)(-1)^{DD^{\p}}.
\end{eqnarray*}
Thus T is equal to
$$
T^{\p}:=\sum^{i+j-1}_{k=i}\sum_{\tau}E^{\p}(\tau,i)
N_{i+1}\big(Dx_{\tau(1)},...,x_{\tau(i-1)},x_{\tau(i)=k},
N_{j}(D^{\p}x_{\tau(i+1)},...,x_{\tau(k)},
x_{k+1},...,x_{i+j})\big),
$$
where $\tau$ is an $(i,k-i)$-unshuffle
such that $\tau(i)=k$.
\begin{claim}
$T^{\p}=T^{\p\p}$, where
$$
T^{\p\p}:=\sum_{\nu}E^{\p}(\nu,i)
N_{i+1}\big(Dx_{\nu(1)},...,x_{\nu(i-1)},x_{\nu(i)},
N_{j}(D^{\p}x_{\nu(i+1)},...,x_{\nu(i+j-1)},x_{i+j})\big),
$$
where $\nu$ is an $(i,j-1)$-unshuffle.
\end{claim}
\begin{proof}
We put $k:=\nu(i)$ in $T^{\p\p}$.
Since $\nu$ is an $(i,j-1)$-unshuffle,
$i\le k\le i+j-1$.
Replace $\nu$ with $\tau$.
This replacement preserves the order of variables.
Hence $E^{\p}(\tau,i)=E^{\p}(\nu,i)$, which gives
the identity of the claim.
\end{proof}
Since
$T^{\p\p}
=N_{i+1}^{c}D\c N_{j}^{(i+j)}D^{\p}(\x,x_{i+j})$,
we obtain (\ref{newdeft}).
\medskip\\
\noindent\textbf{(II)}
We show the identity,
\begin{equation}\label{newdefu}
U=-(-1)^{DD^{\p}}N_{j}^{c}D^{\p}\c N_{i+1}^{c}D(\x,x_{i+j}).
\end{equation}
We replace $\sigma$ in U
with an unshuffle permutation $\tau$ along the table,
\begin{center}
\begin{tabular}{cccccc}
\hline
$\sigma(1)$&...&$\sigma(k-i)$&$k+1$&...&$k+a-1$ \\
\hline
$\tau(1)$&...&$\tau(k-i)$&$\tau(k+1-i)$&...&$\tau(k+a-1-i)$ \\
\hline\hline
$\sigma(k+1-i)$&...&$\sigma(k-1)$&$k$& & \\ \hline
$\tau(k+a-i)$&...&$\tau(k+a-2)$&$\tau(k+a-1)$& & \\
\hline
\end{tabular}
\end{center}
Then the Koszul sign is replaced with $\e(\tau)$:
$$
\e(\tau)=\e(\sigma)
(-1)^{(x_{\sigma(k+1-i)}+\cdot\cdot\cdot+x_{\sigma(k-1)}+
x_{k})(x_{k+1}+\cdot\cdot\cdot+x_{k+a-1})},
$$
and then $E(\sigma,k-i)E_{2}=$
\begin{eqnarray*}
&=&\e(\sigma)(-1)^{D(x_{\sigma(1)}+\cdot\cdot\cdot+x_{\sigma(k-i)})}
(-1)^{B(x_{k+1}+\cdot\cdot\cdot+x_{k+a-1})}\\
&=&\e(\sigma)(-1)^{D(x_{\sigma(1)}+\cdot\cdot\cdot+x_{\sigma(k-i)})}
(-1)^{(x_{\sigma(k+1-i)}+\cdot\cdot\cdot+x_{\sigma(k-1)}
+x_{k}+D)(x_{k+1}+\cdot\cdot\cdot+x_{k+a-1})}\\
&=&\e(\sigma)
(-1)^{(x_{\sigma(k+1-i)}+\cdot\cdot\cdot+x_{\sigma(k-1)}+
x_{k})(x_{k+1}+\cdot\cdot\cdot+x_{k+a-1})}
(-1)^{D(x_{\sigma(1)}+\cdot\cdot\cdot+x_{\sigma(k-i)}
+x_{k+1}+\cdot\cdot\cdot+x_{k+a-1})}\\
&=&\e(\tau)(-1)^{D(x_{\tau(1)}+\cdot\cdot\cdot+x_{\tau(k+a-1-i)})}\\
&=&E(\tau,k+a-1-i)=E(\tau,m-i),
\end{eqnarray*}
where $m:=k+a-1$.
\begin{claim}
$U=U^{\p}$, where
\begin{multline*}
U^{\p}:=-(-1)^{DD^{\p}}\sum^{i+j-1}_{m=i}\sum_{\tau}E(\tau,m-i)\\
N_{j}(D^{\p}x_{\tau(1)},...,x_{\tau(m-i)},
N_{i+1}(Dx_{\tau(m+1-i)},...,x_{\tau(m)},
x_{m+1}),x_{m+2},...,x_{i+j}),
\end{multline*}
where $\tau$ is an $(m-i,i)$-unshuffle.
\end{claim}
\begin{proof}
Let $\tau$ be an $(m-i,i)$-unshuffle.
We put $k:=\tau(m)$ and $a:=m+1-\tau(m)$.
Then we have
\begin{multline*}
(\tau(1),...,\tau(m-i);\tau(m+1-i),...,\tau(m),m+1,...,i+j)=\\
(\tau(1),...,\tau(k-i),k+1,...,k+a-1;
\tau(k+a-i),...,\tau(k+a-2),k,k+a,...,i+j).
\end{multline*}
One can replace $\tau$ with an unshuffle $\sigma$,
\begin{multline*}
(\tau(1),...,\tau(m-i);\tau(m+1-i),...,\tau(m),m+1,...,i+j)=\\
(\sigma(1),...,\sigma(k-i),k+1,...,k+a-1;
\sigma(k+1-i),...,\sigma(k-1),k,k+a,...,i+j),
\end{multline*}
which gives the table above.
Up to this permutation, we obtain
$$
\sum_{\tau}=\sum_{(k,a)}\sum_{\sigma}
$$
where $(m-i,m)$ is fixed
and $(k,a)$ runs over all possible pairs.
This gives
$$
\sum_{m\ge i}\sum_{\tau}=\sum_{k\ge i}\sum_{a\ge 1}\sum_{\sigma},
$$
which implies the identity of the claim.
\end{proof}
Since $U^{\p}=-(-1)^{DD^{\p}}N_{j}^{c}D^{\p}\c N_{i+1}^{c}D(\x,x_{i+j})$,
we obtain (\ref{newdefu}).
From (\ref{s=t+u}), (\ref{newdeft}) and (\ref{newdefu}),
we get the desired identity (\ref{ap1}).
The proof is completed.

\noindent
Post doctoral.\\
Science University of Tokyo.\\
3-14-1 Shinjyuku Tokyo Japan.\\
e-mail: K\underline{ }Uchino[at]oct.rikadai.jp

\begin{thebibliography}{}

\bibitem{AP}
M. Ammar and N. Poncin.
Coalgebraic Approach to the Loday Infinity Category,
Stem Differential for $2n$-ary Graded and Homotopy Algebras.
Preprint Arxive, math/0809.4328.
\bibitem{CLP}
J. M. Casas, J-L. Loday and T. Pirashvili.
Leibniz $n$-algebras.
Forum Math. 14 (2002), no. 2, 189--207.
\bibitem{M4}
M. Doubek, M. Markl and P. Zima.
Deformation theory (lecture notes).
Arch. Math. (Brno) 43 (2007), no. 5, 333--371.
\bibitem{GK}
V. Ginzburg and M. Kapranov.
Koszul duality for operads.
Duke Math. J. 76 (1994), no. 1, 203--272. 
\bibitem{K}
Y. Kosmann-Schwarzbach.
From Poisson algebras to Gerstenhaber algebras.
(English, French summary)
Ann. Inst. Fourier (Grenoble) 46 (1996), no. 5, 1243--1274.
\bibitem{K2}
Y. Kosmann-Schwarzbach.
Derived brackets.
Lett. Math. Phys. 69 (2004), 61--87.
\bibitem{M1}
T. Lada and M. Markl.
Strongly homotopy Lie algebras.
Comm. Algebra 23 (1995), no. 6, 2147--2161.
\bibitem{LS}
T. Lada and J. Stasheff.
Introduction to sh Lie algebras for physicists.
Preprint Arxive, hep-th/9209099.
\bibitem{L1}
J-L. Loday and T. Pirashvili.
Universal enveloping algebras of Leibniz algebras
and (co)homology.
Math. Ann. 296 (1993), no. 1, 139--158.
\bibitem{L2}
J-L. Loday.
Cup-product for Leibniz cohomology and dual Leibniz algebras. 
Math. Scand. 77 (1995), no. 2, 189--196.
\bibitem{L3}
J-L. Loday.
Dialgebras.
Lecture Notes in Mathematics, 1763.
Springer-Verlag, Berlin, (2001), 7--66.
\bibitem{M2}
M. Markl.
Models for operads. (English summary) 
Comm. Algebra 24 (1996), no. 4, 1471--1500.
\bibitem{M3}
M. Markl.
Homotopy algebras via resolutions of operads.
Preprint Arxive, math/9808101.
\bibitem{MSS}
M. Markl, S. Shnider and J. Stasheff.
Operads in algebra, topology and physics.
Mathematical Surveys and Monographs, 96.
American Mathematical Society, Providence,
RI, (2002). x+349 pp. 
\bibitem{R1}
D. Roytenberg.
Quasi-Lie bialgebroids and twisted Poisson manifolds.
Lett. Math. Phys. 61 (2002), no. 2, 123--137.
\bibitem{R2}
D. Roytenberg.
AKSZ-BV formalism and Courant algebroid-induced
topological field theories.
Lett. Math. Phys. 79 (2007), no. 2, 143--159.
\bibitem{Va}
J-A. Vallejo.
Nambu-Poisson manifolds and associated $n$-ary Lie algebroids.
J. Phys. A. 34 (2001), no. 13, 2867--2881.
\bibitem{Vo}
T. Voronov.
Higher derived brackets and homotopy algebras.
J. Pure Appl. Algebra 202 (2005), no. 1-3, 133--153.
\end{thebibliography}
\end{document}